\documentclass[a4paper]{article}
\usepackage{fullpage}
\usepackage{amsmath}
\usepackage{amssymb}
\usepackage{amsthm}
\usepackage{graphicx}
\usepackage{bbm}
\usepackage{enumerate}
\usepackage{microtype}
\usepackage{xcolor}
\usepackage[none]{hyphenat}
\usepackage[colorlinks=true,urlcolor=blue,linkcolor=blue,plainpages=false,pdfpagelabels]{hyperref}
\allowdisplaybreaks

\newtheorem{theorem}{Theorem}[section]
\newtheorem{proposition}[theorem]{Proposition}
\newtheorem{lemma}[theorem]{Lemma}

\newcommand{\N}{\mathbb{N}}

\newcommand{\eps}{\varepsilon}

\newcommand{\wt}[1]{\widetilde{#1}}

\title{A quantitative multiparameter mean ergodic theorem}
\author{Andrei Sipo\c s${}^{a,b}$\\[2mm]
\footnotesize ${}^a$Research Center for Logic, Optimization and Security (LOS), Department of Computer Science,\\
\footnotesize Faculty of Mathematics and Computer Science, University of Bucharest,\\
\footnotesize Academiei 14, 010014 Bucharest, Romania\\[1mm]
\footnotesize ${}^b$Simion Stoilow Institute of Mathematics of the Romanian Academy,\\
\footnotesize Calea Grivi\c tei 21, 010702 Bucharest, Romania\\[2mm]
\footnotesize E-mail: andrei.sipos@fmi.unibuc.ro\\
}
\date{}

\begin{document}

\maketitle

\begin{abstract}
We use techniques of proof mining to obtain a computable and uniform rate of metastability (in the sense of Tao) for the mean ergodic theorem for a finite number of commuting linear contractive operators on a uniformly convex Banach space.

\noindent {\em Mathematics Subject Classification 2010}: 37A30, 47A35, 03F10.

\noindent {\em Keywords:} Proof mining, mean ergodic theorem, uniformly convex Banach spaces, rate of metastability.
\end{abstract}

\section{Introduction}

In the mid-1930s, Riesz proved\footnote{The proof was suggested by an idea of Carleman \cite{Car32} and it first appeared in Hopf's 1937 book on ergodic theory \cite{Hop37}. The argument was published in an individual paper only in 1943 \cite{RS43}.} the following formulation of the classical mean ergodic theorem of von Neumann \cite{vN32}: if $X$ is a Hilbert space, $T: X \to X$ is a linear operator such that for all $x \in X$, $\|Tx\| \leq \|x\|$, then for any $x\in X$, we have that the corresponding sequence of ergodic averages $(x_n)$, where for each $n$,
$$x_n := \frac1{n+1}\sum_{k=0}^n T^kx,$$
is convergent.

It is then natural to ask for a {\it rate of convergence} -- this ties into the area of {\it proof mining}, an applied subfield of mathematical logic. This program in its current form has been developed in the last twenty years primarily by Ulrich Kohlenbach and his collaborators (see \cite{Koh08} for a comprehensive monograph; a recent survey which serves as a short and accessible introduction is \cite{Koh19}), and seeks to apply proof-theoretic tools to results in ordinary mathematics in order to extract information which may not be immediately apparent. Unfortunately, it is known that ergodic convergence can be arbitrarily slow \cite{Kre78}, i.e. there cannot exist a uniform rate of convergence. Kohlenbach's work then suggests that one should look instead at the following (classically but not constructively) equivalent form of the Cauchy property:
$$\forall \eps>0 \,\forall g:\N\to\N \,\exists N\in \N \,\forall i,j\in [N,N+g(N)] \ \left(\| x_i-x_j\| \leq\eps\right),$$
which has been arrived at independently by Terence Tao in his own work on ergodic theory \cite{Tao08A} and popularized in \cite{Tao08} -- as a result of the latter, the property got its name of {\it metastability} (at the suggestion of Jennifer Chayes). Kohlenbach's metatheorems then guarantee the existence of a computable and uniform {\it rate of metastability} -- a bound $\Theta(\eps,g)$ on the $N$ in the sentence above -- extractable from any proof that shows the convergence of a given class of sequences and that may be formalized in one of the logical systems for which such metatheorems have so far been developed, and in the late 2000s, Avigad, Gerhardy and Towsner \cite{AviGerTow10} carried out the actual extraction for Riesz's proof mentioned above.

In 1939, Birkhoff generalized \cite{Bir39} the mean ergodic theorem to uniformly convex Banach spaces; in a paper published in 2009 \cite{KohLeu09}, Kohlenbach and Leu\c stean analyzed that proof in order to obtain a quantitative version.  The main technical obstacle to overcome was the use in the proof of the principle that every sequence of non-negative real numbers $(a_n)$ has an infimum, which they managed to replace by an {\it arithmetical} greatest lower bound principle -- stating that for any $\eps > 0$ there is an $n \in \N$ such that for any $m \in \N$, $a_n \leq a_m + \eps$ -- and gave a quantitative version thereof. Thus, they extracted a rate of metastability which depended, in addition, on the modulus of uniform convexity. In the intervening years, proof mining has continued this line of research, yielding rates of metastability for nonlinear generalizations of ergodic averages \cite{Koh11, Koh12, KohLeu12A, KohLeu12, Saf12, Kor15, LeuNic16, FerLeuPin19, KohSipXX} or bounds on the number of fluctuations \cite{AviRut15} (see \cite{KohSaf14} for a detailed proof-theoretic study of the concept).

Inspired by Birkhoff's proof, Riesz produced a new one in 1941 \cite{Rie41} that delineates more clearly the role played by uniform convexity (i.e. the fact that in such spaces minimizing sequences of convex sets are convergent). One of the advantages of this argument, pointed out in \cite[p. 412]{RS55}, is that it readily generalizes to the case where one deals with more than one contractive operator (a result attributed there to Dunford \cite{Dun39}) -- that is, if $d \geq 1$ and $T_1,\ldots,T_d: X \to X$ are commuting linear operators such that for each $l$ and for each $x \in X$, $\|T_l x\| \leq \|x\|$, then for any $x \in X$, the sequence $(x_n)$, defined, for any $n$, by
$$x_n := \frac1{(n+1)^d} \sum_{k_1=0}^n\ldots\sum_{k_d=0}^n T_1^{k_1}\ldots T_d^{k_d}x$$
is convergent.

{\it The goal of this paper is to extract out of that argument a rate of metastability for the above multiparameter mean ergodic theorem.}

Towards that end, we noticed that the infimum of all the convex combinations of the iterates of $x$ may be effectively replaced by that of just the arithmetic means of pairs of two given ergodic averages. Therefore, we only need to extend the abovementioned principle of Kohlenbach and Leu\c stean to double sequences, which we do in Lemma~\ref{glb-ar-2} by means of the Cantor pairing function. After stating the facts that we shall need about uniformly convex Banach spaces, we express our quantitative metastability result in the form of Theorem~\ref{main}.

\section{Main results}

For all $f:\N \to \N$, we define $\wt{f} : \N \to \N$, for all $n$, by $\wt{f}(n):=n+f(n)$ and $f^M:\N\to\N$, for all $n$, by $f^M(n):=\max_{i \leq n} f(i)$; in addition, for all $n \in \N$, we denote by $f^{(n)}$ the $n$-fold composition of $f$ with itself.

We may now state the quantitative arithmetical greatest lower bound principle of Kohlenbach and Leu\c stean \cite{KohLeu09}.

\begin{lemma}[{cf. \cite[Lemma 3.1]{KohLeu09}}]\label{glb-ar}
Let $(a_n) \subseteq [0,1]$. Then for all $\eps > 0$ and all $g : \N \to \N$ there is an $N \leq \left(g^M\right)^{\left(\left\lceil\frac1\eps\right\rceil\right)}(0)$ such that for all $s \leq g(N)$, $a_N \leq a_s + \eps$.
\end{lemma}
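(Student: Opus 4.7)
The plan is to iteratively construct a finite sequence $N_0, N_1, \ldots$ along which the values $a_{N_k}$ drop by more than $\eps$ at each step, and to use the fact that $(a_n) \subseteq [0,1]$ to force this chain to terminate in at most $\lceil 1/\eps \rceil$ steps.

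Concretely, I would set $N_0 := 0$ and, given $N_k$, check whether $a_{N_k} \le a_s + \eps$ for all $s \le g(N_k)$: if so, the conclusion holds with $N := N_k$ and the construction stops; otherwise, the negation provides some $s \le g(N_k)$ with $a_s < a_{N_k} - \eps$, and I would set $N_{k+1} := s$. By induction along this chain, $a_{N_k} < a_0 - k\eps \le 1 - k\eps$, so if the iteration ever reached $k = \lceil 1/\eps \rceil$ it would yield $a_{N_k} < 0$, contradicting $(a_n) \subseteq [0,1]$. Hence the construction must terminate at some $k \le \lceil 1/\eps \rceil$.

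For the quantitative bound, I would pass to the monotone majorant $g^M$: by construction $N_{k+1} \le g(N_k) \le g^M(N_k)$, and since $g^M$ is nondecreasing, a straightforward induction yields $N_k \le (g^M)^{(k)}(0)$. As the sequence $\bigl( (g^M)^{(n)}(0) \bigr)_n$ is itself nondecreasing in $n$, the termination index $k \le \lceil 1/\eps \rceil$ gives $N = N_k \le (g^M)^{(\lceil 1/\eps \rceil)}(0)$, as required.

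The argument is elementary; the only real point of care is the substitution of $g$ by $g^M$, which is what makes the iterated-function bound uniformly valid regardless of where in the chain termination actually occurs.
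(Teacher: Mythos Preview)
Your argument is correct and is the standard descent argument for this quantitative greatest-lower-bound principle. Note that the paper does not give its own proof of this lemma: it is simply quoted from \cite[Lemma~3.1]{KohLeu09}, whose proof proceeds exactly along the lines you sketch (iterating $g^M$ from $0$ and observing that more than $\lceil 1/\eps\rceil$ drops of size $>\eps$ would force some $a_{N_k}<0$). One tiny cosmetic point: your inductive inequality $a_{N_k}<a_0-k\eps$ should be read for $k\geq 1$ (at $k=0$ it is an equality), but this does not affect the argument.
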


In order to extend the above principle to double sequences, we make use of the bijection $c: \N^2 \to \N$, known as the {\it Cantor pairing function}, defined for all $m$, $n \in \N$ by
$$c(m,n):=\frac{(m+n)(m+n+1)}2 + n.$$
Note that for all $m$ and $n$, we have firstly that $m$, $n\leq c(m,n)$ and then that for all $s$ with $m$, $n\leq s$, $c(m,n) \leq 2s^2+2s$.

\begin{lemma}\label{glb-ar-2}
Let $(a_{m,n}) \subseteq [0,1]$. Define, for any suitable $g$, $\eps$, $s$:
\begin{align*}
f(s)&:=2s^2+2s\\
G(\eps,g)&:=\left(\left(f \circ g\right)^M\right)^{\left(\left\lceil\frac1\eps\right\rceil\right)}(0).
\end{align*}
Then for all $\eps > 0$ and all $g : \N \to \N$ there is an $N \leq G(\eps,g)$ and $p$, $q \leq N$ such that for all $i$, $j \leq g(N)$, $a_{p,q} \leq a_{i,j} + \eps$.
\end{lemma}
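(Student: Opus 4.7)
The plan is to reduce the double-sequence statement to the single-sequence principle of Lemma~\ref{glb-ar} via the Cantor pairing function $c$. Since $c:\N^2\to\N$ is a bijection, I can define a sequence $(b_k) \subseteq [0,1]$ by $b_k := a_{m,n}$, where $(m,n)$ is the unique pair with $c(m,n) = k$. This essentially linearizes the double-indexed data while retaining enough control on the indices because of the inequalities $m, n \leq c(m,n)$ and $c(m,n) \leq 2s^2 + 2s = f(s)$ whenever $m, n \leq s$, both recorded in the excerpt.

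Next, I would apply Lemma~\ref{glb-ar} to the sequence $(b_k)$ with the function $g' := f \circ g$. This produces some $N$ with
\[
N \leq \left(\left(g'\right)^M\right)^{\left(\left\lceil \tfrac{1}{\eps}\right\rceil\right)}(0) = \left(\left(f\circ g\right)^M\right)^{\left(\left\lceil\tfrac{1}{\eps}\right\rceil\right)}(0) = G(\eps,g),
\]
and such that $b_N \leq b_s + \eps$ for all $s \leq g'(N) = f(g(N))$. Setting $(p,q) := c^{-1}(N)$, the inequality $p, q \leq c(p,q) = N$ gives the required bound on $p$ and $q$.

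Finally, for any $i, j \leq g(N)$, the second property of the Cantor pairing yields $c(i,j) \leq f(g(N)) = g'(N)$, so we may instantiate the Lemma~\ref{glb-ar} conclusion at $s := c(i,j)$ to obtain $a_{p,q} = b_N \leq b_{c(i,j)} + \eps = a_{i,j} + \eps$, which is exactly what is claimed.

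There is no real obstacle here: the entire argument is a transport along the Cantor bijection, and the only quantitative point to verify is that the iterated majorant $(g'^M)^{(\lceil 1/\eps\rceil)}(0)$ coming out of Lemma~\ref{glb-ar} agrees with the function $G(\eps,g)$ defined in the statement, which it does by construction. The one mildly delicate bookkeeping step is making sure to use the bound $c(i,j) \leq f(\max(i,j))$ (applied with $\max(i,j) \leq g(N)$) rather than a looser version, so that the argument to $f$ is exactly $g(N)$ and the outer iterate matches $G(\eps,g)$ without any slack.
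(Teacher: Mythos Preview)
Your proposal is correct and follows exactly the same approach as the paper's proof: linearize the double sequence via the Cantor pairing, apply Lemma~\ref{glb-ar} with $f\circ g$ in place of $g$, and read off $p,q$ from $c^{-1}(N)$. The only difference is that you spell out explicitly the choice $g':=f\circ g$ and the verification that the resulting bound equals $G(\eps,g)$, which the paper leaves implicit.
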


\begin{proof}
Put, for all $s$, if $m$ and $n$ are such that $c(m,n)=s$, $b_s:=a_{m,n}$ (using that $c$ is bijective). By Lemma~\ref{glb-ar}, there is an $N \leq G(\eps,g)$ such that for all $s \leq f(g(N))$, $b_N \leq b_s + \eps$. Let $p$ and $q$ be such that $c(p,q)=N$, so $p$, $q \leq N$. Let $i$, $j \leq g(N)$. Since then $c(i,j) \leq f(g(N))$, $b_N \leq b_{c(i,j)} + \eps$, so $a_{p,q} \leq a_{i,j} + \eps$.
\end{proof}

Uniform convexity in Banach spaces was first introduced by Clarkson \cite{Cla36}. We use the following formulation: a Banach space $X$ is {\it uniformly convex} if there is an $\eta : (0,\infty) \to (0,1]$, called a {\it modulus of uniform convexity}, such that for all $\eps>0$ and all $x,y \in X$ with $\|x\|\leq 1$, $\|y\| \leq 1$ and $\|x-y\| \geq \eps$ one has that
$$\left\|\frac{x+y}2\right\| \leq 1-\eta(\eps).$$
One typically defines the fixed modulus of uniform convexity $\delta_X$ by putting, for all $\eps \in (0,2]$,
$$\delta_X(\eps):=  \inf \left\{ 1- \left\| \frac{x+y}2 \right\| \bigm| \|x\|\leq 1, \|y\|\leq 1, \|x-y\| \geq \eps \right\}$$
and says that $X$ is uniformly convex if and only if for all $\eps \in (0,2]$, $\delta_X(\eps)>0$. It is immediate that the two definitions coincide: in this case, $\delta_X$ may serve as a modulus $\eta$ in our sense (it is, in fact, the largest such modulus) by putting for all $\eps>0$, $\eta(\eps):=\delta_X(\min(\eps,2))$.

We shall use the following characteristic property of uniformly convex spaces.

\begin{proposition}[{\cite[Lemma 3.2]{KohLeu09}}]\label{u-prop}
Let $X$ be a Banach space with modulus of uniform convexity $\eta$. Define, for any $\eps > 0$, $u(\eps):= \frac\eps2 \cdot \eta(\eps)$. Then, for any $\eps>0$ and any $x$, $y \in X$ with $\|x\| \leq \|y\| \leq 1$ and $\|x-y\| \geq \eps$ we have that
$$\left\|\frac{x+y}2\right\| \leq \|y\|- u(\eps).$$
In addition, if there is a nondecreasing function $\eta'$ such that for all $\eps$, $\eta(\eps)=\eps\eta'(\eps)$ (e.g. in the case of Hilbert spaces where $\eps \mapsto \frac{\eps^2}8$ is a modulus of uniform convexity), then one can take $u$ to be simply $\eta$.
\end{proposition}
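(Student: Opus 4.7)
The plan is to reduce to the unit-ball case by rescaling by $\|y\|$ and then invoking the defining inequality of the modulus of uniform convexity. First I would observe that the hypothesis $\|x-y\| \geq \eps$ together with $\|x\| \leq \|y\|$ already forces $\|y\| \geq \eps/2$, via the triangle inequality $\eps \leq \|x-y\| \leq \|x\| + \|y\| \leq 2\|y\|$. In particular $\|y\| > 0$, so I may set $\wt{x} := x/\|y\|$ and $\wt{y} := y/\|y\|$; then $\|\wt{x}\| \leq 1$, $\|\wt{y}\| = 1$, and $\|\wt{x} - \wt{y}\| = \|x-y\|/\|y\| \geq \eps$, the last step using $\|y\| \leq 1$.

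Next I would apply the definition of uniform convexity to $\wt{x},\wt{y}$ at level $\eps$, obtaining $\|(\wt{x}+\wt{y})/2\| \leq 1 - \eta(\eps)$, and scale back up by $\|y\|$:
$$\left\|\frac{x+y}{2}\right\| \leq \|y\|\bigl(1 - \eta(\eps)\bigr) = \|y\| - \|y\|\eta(\eps) \leq \|y\| - \tfrac{\eps}{2}\eta(\eps) = \|y\| - u(\eps),$$
where the second inequality uses the lower bound $\|y\| \geq \eps/2$ established above. This yields the main assertion.

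For the addendum, assuming $\eta(\eps) = \eps\,\eta'(\eps)$ with $\eta'$ nondecreasing, I would sharpen the argument by invoking uniform convexity at the larger (and still admissible) level $\eps/\|y\|$, using that $\|\wt{x} - \wt{y}\| = \|x-y\|/\|y\| \geq \eps/\|y\|$. This gives $\|(\wt{x}+\wt{y})/2\| \leq 1 - \eta(\eps/\|y\|)$, and rescaling now yields
$$\left\|\frac{x+y}{2}\right\| \leq \|y\| - \|y\|\,\eta(\eps/\|y\|) = \|y\| - \eps\,\eta'(\eps/\|y\|) \leq \|y\| - \eps\,\eta'(\eps) = \|y\| - \eta(\eps),$$
the last inequality combining monotonicity of $\eta'$ with $\eps/\|y\| \geq \eps$.

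I do not anticipate a serious obstacle in either part. The one delicate point is that the rescaling preserves a lower bound on $\|\wt{x} - \wt{y}\|$ only because $\|y\| \leq 1$, and it is precisely this weakness in the general case -- where we retain merely $\|\wt{x}-\wt{y}\| \geq \eps$ rather than the sharper $\eps/\|y\|$ exploited in the addendum -- that prevents a direct use of $\eta$ and forces the factor $\eps/2$ appearing in the definition of $u$.
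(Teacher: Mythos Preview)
Your argument is correct in both parts. The rescaling by $\|y\|$ together with the observation $\|y\|\geq\eps/2$ is exactly the standard device, and your treatment of the addendum (applying $\eta$ at the sharper level $\eps/\|y\|$ and then using monotonicity of $\eta'$) is clean. Note, however, that the present paper does not give its own proof of this proposition: it is stated with a citation to \cite[Lemma~3.2]{KohLeu09} and used as a black box, so there is nothing here to compare your argument against. Your write-up is precisely the kind of proof one finds for this lemma in the literature.
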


With that in mind, we may now state our main theorem.

\begin{theorem}\label{main}
Let $X$ be a Banach space. Let $G$ be defined as in Lemma~\ref{glb-ar-2}. 
Define, for any suitable $\eps$, $g$, $u$, $d$, $\delta$, $Q$, $\gamma$, $n$:
\begin{align*}
\Phi(d,\delta,Q)&:=\max\left(Q,\left\lceil\frac{2^dQ}\delta\right\rceil\right)\\
h_{\gamma,g,d}(n)&:=\wt{g}\left(\Phi\left(d,\frac\gamma2,n\right)\right)\\
\Psi(d,\gamma,g)&:=\Phi\left(d,\frac\gamma2,G\left(\frac\gamma2,h_{\gamma,g,d}\right)\right)\\
\Theta_{u,d}(\eps,g)&:=\Psi\left(d,\frac{u(\eps)}2,g\right).
\end{align*}
Let $u$ be such that the property described by Proposition~\ref{u-prop} holds. Let $d \geq 1$ and $T_1,\ldots,T_d: X \to X$ be commuting linear operators such that for all $l$, $T_l$ is contractive, i.e. for all $x \in X$, $\|T_l x\| \leq \|x\|$. We denote, for any $x \in X$ and $i \in \N^d$, $T^ix:=T_1^{i_1}\ldots T_d^{i_d} x$. Let $x \in X$ with $\|x\| \leq 1$. Put, for all $n$,
$$x_n := \frac1{(n+1)^d} \sum_{k \in [0,n]^d} T^kx.$$
Let $\eps > 0$ and $g: \N \to \N$. Then there is an $N \leq \Theta_{u,d}(\eps,g)$ such that for all $i$, $j \in [N, N+ g(N)]$, $\|x_i - x_j\| \leq \eps$.
\end{theorem}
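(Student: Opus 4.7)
\emph{Strategy.} The plan is to follow the standard pattern for this kind of extraction: replace the infimum over the closed convex hull of iterates of $x$ (the object in Riesz's 1941 proof) by an arithmetical infimum, extracted via Lemma~\ref{glb-ar-2}, and then close using the uniform-convexity estimate of Proposition~\ref{u-prop}. The double sequence to feed into the arithmetical principle is $a_{m,n} := \|(x_m + x_n)/2\| \in [0,1]$. Throughout, set $\gamma := u(\eps)/2$.

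\emph{Extraction of a near-minimal pair.} Applying Lemma~\ref{glb-ar-2} to $(a_{m,n})$ with tolerance $\gamma/2$ and auxiliary function $h_{\gamma,g,d}$ produces $N_0 \le G(\gamma/2, h_{\gamma,g,d})$ together with $p, q \le N_0$ such that
\[
a_{p,q} \le a_{i,j} + \gamma/2 \quad \text{for all } i, j \le h_{\gamma,g,d}(N_0).
\]
Setting $N := \Phi(d, \gamma/2, N_0)$, one has $N \le \Theta_{u,d}(\eps, g)$ by monotonicity of $\Phi$ in its last argument, and $h_{\gamma,g,d}(N_0) = \wt{g}(N) = N + g(N)$, so the near-minimality holds throughout the metastability window $[N, N+g(N)]$.

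\emph{Smoothing estimate.} The key technical step is to show that for every $M$ with $N \le M \le N + g(N)$,
\[
\|x_M\| \le a_{p,q} + \gamma/2.
\]
Writing $S_M y := (M+1)^{-d} \sum_{k\in[0,M]^d} T^k y$, the operator $S_M$ is contractive as a convex combination of products of contractions, so $\|S_M((x_p+x_q)/2)\| \le a_{p,q}$ and $x_M = S_M x$. It therefore suffices to bound $\|x_M - S_M((x_p+x_q)/2)\|$ by $\gamma/2$. Since $x - x_p = (p+1)^{-d}\sum_{l \in [0,p]^d}(I - T^l)x$, commutativity of the $T_l$'s permits the telescoping
\[
I - T^l = \sum_{\beta=1}^d \left(\prod_{\alpha < \beta} T_\alpha^{l_\alpha}\right)(I - T_\beta^{l_\beta}),
\]
and the one-variable identity $(M+1)^{-1}\sum_{k=0}^M T_\beta^k(I - T_\beta^{l_\beta})$ collapses to a boundary sum of length $l_\beta$, yielding $\|S_M(I - T^l)x\| \le 2\sum_\beta l_\beta/(M+1)$. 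Averaging over $l$ and symmetrising in $p, q$ gives a bound of the form $C_d \max(p,q)/(M+1)$ with an explicit $C_d \le 2^d$; the definition of $\Phi$ (together with $p, q \le N_0$) then forces this to be $\le \gamma/2$ once $M \ge N$.

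\emph{Closing by uniform convexity, and main obstacle.} For $i, j \in [N, N+g(N)]$ we assume WLOG $\|x_j\| \le \|x_i\| \le 1$. If $\|x_i - x_j\| \ge \eps$, then Proposition~\ref{u-prop} gives $\|(x_i+x_j)/2\| \le \|x_i\| - u(\eps) \le a_{p,q} + \gamma/2 - u(\eps)$, whereas the extraction gives $\|(x_i+x_j)/2\| = a_{i,j} \ge a_{p,q} - \gamma/2$; combining these yields $u(\eps) \le \gamma = u(\eps)/2$, contradicting $u(\eps) > 0$. The step I expect to be the main obstacle is the smoothing estimate above: the telescoping must be carried out carefully across the $d$ commuting directions, and one must track constants so that the bound fits inside the $2^d$ factor hard-coded into $\Phi$. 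The other two ingredients --- the application of Lemma~\ref{glb-ar-2} and the final uniform-convexity argument --- are then essentially bookkeeping.
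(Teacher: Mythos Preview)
Your proposal is correct and follows the same three-step architecture as the paper: apply Lemma~\ref{glb-ar-2} to $a_{m,n}=\|(x_m+x_n)/2\|$, establish a smoothing estimate $\|x_M\|\le a_{p,q}+\gamma/2$ for $M\ge N$, and close via Proposition~\ref{u-prop}. The extraction step and the final uniform-convexity contradiction match the paper's almost verbatim (the paper first combines your two inequalities into $\|x_i\|\le\|(x_i+x_j)/2\|+\gamma$ before contradicting, but that is cosmetic).

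The one substantive difference is in the smoothing estimate itself. The paper proves a more general Claim~1: for \emph{any} convex combination $z=\sum_{j\in[0,Q]^d}c_jT^jx$ and any $n\ge\Phi(d,\delta,Q)$ one has $\|x_n\|\le\|z\|+\delta$. This is obtained by expanding $z_n:=S_nz$, regrouping terms according to $k=i+j$, and bounding $\|x_n-z_n\|$ by the lattice-point count $\bigl|[0,n+Q]^d\setminus[Q,n]^d\bigr|/(n+1)^d$, which is then controlled via the binomial estimate $(m+Q)^d-(m-Q)^d\le 2^dm^{d-1}Q$; the particular choice $z=(x_p+x_q)/2$ with $Q=N_0$ gives what is needed. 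Your route instead telescopes $I-T^l$ coordinate by coordinate and uses that only the $\beta$-th factor of $S_M$ interacts nontrivially with $I-T_\beta^{l_\beta}$, which yields the sharper constant $d$ in place of $2^d$ (you then discard the gain by invoking $d\le 2^d$ to fit inside the given $\Phi$). The paper's argument buys generality---it applies uniformly to arbitrary convex combinations of iterates, not just averages of ergodic averages---while yours is shorter and gives a better constant for the specific $z$ actually used.
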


\begin{proof}
We first prove the following two claims.\\[1mm]

\noindent {\bf Claim 1.} For all $\delta >0$ and all $Q$ and $(c_j)_{j \in [0,Q]^d} \subseteq [0,1]$ with $\sum_{j \in [0,Q]^d} c_j = 1$, if we set
$$z:=\sum_{j \in [0,Q]^d} c_j T^j x,$$
then for all $n \geq \Phi(d,\delta,Q)$, $\|x_n\| \leq \|z\|+\delta$.\\[1mm]
\noindent {\bf Proof of claim 1:} Put, for all $n$,
$$z_n := \frac1{(n+1)^d} \sum_{i \in [0,n]^d} T^iz.$$
Then it is enough to show that for all $n \geq Q$ we have that $\|x_n-z_n\| \leq \frac{2^dQ}{n+1}$, since then for all $n \geq \Phi(d,\delta,Q)$, given that $n \geq \frac{2^dQ}\delta$, one has $\frac{2^dQ}{n+1} \leq \delta$, so $\|x_n-z_n\| \leq \delta$ and (using here the contractivity of the $T_l$'s)
$$\|x_n\| \leq \|z_n\| + \|x_n - z_n\| \leq \|z\| + \delta.$$

Let, then, $n \geq Q$. We have that (using here that the $T_l$'s are linear and commuting)
$$z_n = \frac1{(n+1)^d} \sum_{i \in [0,n]^d} T^i\sum_{j \in [0,Q]^d} c_j T^j x = \frac1{(n+1)^d} \sum_{i \in [0,n]^d}\sum_{j \in [0,Q]^d}c_j T^{i+j} x.$$
If for any $k \in [0,n+Q]^d$ we denote by $R_k$ the set of all $j \in [0,Q]^d$ with the property that there is an $i \in [0,n]^d$ with $i+j=k$, we have that
$$z_n = \frac1{(n+1)^d} \sum_{k \in [0,n+Q]^d} \left( \sum_{j \in R_k} c_j \right) T^k x.$$
We show that for all $k \in [Q,n]^d$, $R_k=[0,Q]^d$. Let $j \in [0,Q]^d$ and put $i:=k-j$. We have to show that $i \in [0,n]^d$. Let $s \in [1,d]$. Since $Q \leq k_s \leq n$ and $-Q \leq -j_s \leq 0$, we have $0 \leq i_s \leq n$, which is what we needed. Thus, we have
$$z_n = \frac1{(n+1)^d} \sum_{k \in [0,n+Q]^d \setminus [Q,n]^d} \left( \sum_{j \in R_k} c_j \right) T^k x + \frac1{(n+1)^d} \sum_{k \in [Q,n]^d} \left( \sum_{j \in [0,Q]^d} c_j \right) T^k x.$$
Since on the other hand,
$$x_n = \frac1{(n+1)^d} \sum_{k \in [0,n]^d} T^kx = \frac1{(n+1)^d} \sum_{k \in [0,n]^d} \left( \sum_{j \in [0,Q]^d} c_j \right) T^k x,$$
we have, since $[Q,n]^d\subseteq [0,n]^d$, that 
$$x_n-z_n=\frac1{(n+1)^d} \sum_{\substack{k \in [0,n+Q]^d \setminus [Q,n]^d\\ k \in [0,n]^d}} \left( \sum_{\substack{j \in [0,Q]^d\\j \notin R_k}} c_j \right) T^k x - \frac1{(n+1)^d} \sum_{\substack{k \in [0,n+Q]^d \setminus [Q,n]^d\\ k \notin [0,n]^d}} \left( \sum_{j \in R_k} c_j \right) T^k x,$$
so, using that $\|x\| \leq 1$ and (again) the contractivity of the $T_l$'s,
$$\|x_n-z_n\| \leq \frac1{(n+1)^d} \sum_{\substack{k \in [0,n+Q]^d \setminus [Q,n]^d\\ k \in [0,n]^d}} 1 + \frac1{(n+1)^d} \sum_{\substack{k \in [0,n+Q]^d \setminus [Q,n]^d\\ k \notin [0,n]^d}} 1  = \frac{\left|[0,n+Q]^d \setminus [Q,n]^d\right|}{(n+1)^d},$$
so, by putting $m:=n+1$,
$$\|x_n-z_n\| \leq \frac{(n+Q+1)^d - (n-Q+1)^d}{(n+1)^d} = \frac{(m+Q)^d - (m-Q)^d}{m^d}.$$
Since $(m+Q)^d = m^d + \binom{d}{1}m^{d-1}Q + \binom{d}{2}m^{d-2}Q^2 + \binom{d}{3}m^{d-3}Q^3 + \ldots$ and $(m-Q)^d = m^d - \binom{d}{1}m^{d-1}Q + \binom{d}{2}m^{d-2}Q^2 - \binom{d}{3}m^{d-3}Q^3 + \ldots$, we have that
$$(m+Q)^d - (m-Q)^d = 2\left(\binom{d}{1}m^{d-1}Q + \binom{d}{3}m^{d-3}Q^3 + \ldots\right).$$
Since $m \geq Q$, for each $k$, $m^{d-k}Q^k \leq m^{d-1}Q$, so
$$(m+Q)^d - (m-Q)^d \leq 2 m^{d-1}Q\left(\binom{d}{1} + \binom{d}{3} + \ldots\right) = 2^dm^{d-1}Q,$$
from which we get
$$\|x_n-z_n\| \leq \frac{2^dm^{d-1}Q}{m^d} = \frac{2^dQ}m = \frac{2^dQ}{n+1},$$
which is what we wanted.
\hfill $\blacksquare$\\[2mm]

\noindent {\bf Claim 2.} For all $\gamma > 0$ there is an $N \leq \Psi(d,\gamma,g)$ such that for all $i$, $j \in [N, N+ g(N)]$, $\|x_i\| \leq \left\|\frac{x_i+x_j}2\right\| + \gamma$ -- note that by symmetry we also have  $\|x_j\| \leq \left\|\frac{x_i+x_j}2\right\| + \gamma$.\\[1mm]
\noindent {\bf Proof of claim 2:} By Lemma~\ref{glb-ar-2}, there is a $Q \leq G\left(\frac\gamma2,h_{\gamma,g,d}\right)$ and $p$, $q \leq Q$ such that for all $i$, $j \leq h_{\gamma,g,d}(Q)$,
$$\left\|\frac{x_p+x_q}2\right\| \leq \left\|\frac{x_i+x_j}2\right\| + \frac\gamma2.$$
Put $N:=\Phi\left(d,\frac\gamma2,Q\right)$. Then, using that $\Phi$ is nondecreasing in its last argument,
$$N \leq \Phi\left(d,\frac\gamma2,G\left(\frac\gamma2,h_{\gamma,g,d}\right)\right) = \Psi(d,\gamma,g).$$
Let $i$, $j \in [N, N+ g(N)]$. Note that
$$N + g(N) = \wt{g}\left(\Phi\left(d,\frac\gamma2,Q\right)\right) = h_{\gamma,g,d}(Q),$$
so
$$\left\|\frac{x_p+x_q}2\right\| \leq \left\|\frac{x_i+x_j}2\right\| + \frac\gamma2.$$
In addition, since $i \geq \Phi\left(d,\frac\gamma2,Q\right)$, we have by the previous claim that
$$\|x_i\| \leq \left\|\frac{x_p+x_q}2\right\| + \frac\gamma2.$$
Putting together the last two inequalities, we obtain our conclusion.
\hfill $\blacksquare$\\[2mm]

Apply the above claim for $\gamma:=\frac{u(\eps)}2$ to get an $N \leq \Theta_{u,d}(\eps,g)$. Let $i$, $j \in [N, N+ g(N)]$ and assume w.l.o.g. $\|x_j\| \leq \|x_i\|$. Assume that $\|x_i - x_j\| > \eps$. Then, using Proposition~\ref{u-prop} and the conclusion of the above claim, we get
$$\left\|\frac{x_i+x_j}2\right\| \leq \|x_i\| - u(\eps) \leq \left\|\frac{x_i+x_j}2\right\| - \frac{u(\eps)}2,$$
a contradiction.
\end{proof}

Let us extend the above result to an arbitrary $x \in X$. For any $b>0$, set $\Theta_{u,d,b}(\eps,g):=\Theta_{u,d}(\eps/b,g)$. Then if $b>0$ is such that $\|x\| \leq b$, if we put $\wt{x}:=x/b$ we see that $\|\wt{x}\|\leq 1$ and for each $n$, the corresponding $\wt{x}_n$ is equal to $x_n/b$. By applying the above theorem for $\eps/b$, $g$ and $\wt{x}$ we get that there is an $N \leq \Theta_{u,d,b}(\eps,g)$ such that for all $i$, $j \in [N, N+ g(N)]$, $\|\wt{x}_i - \wt{x}_j\| \leq \eps/b$ -- that is, $\|x_i-x_j\|\leq\eps$. Thus, $\Theta_{u,d,b}$ is a rate of metastability applicable for this general situation.

We shall now see a concrete example of the bound in Theorem~\ref{main}. The simplest non-Hilbert examples of uniformly convex Banach spaces are the $L^p$ spaces, and there one can take (as seen in \cite{KohLeu12}), for $p \geq 2$, the function $u$ to be $\eps \mapsto \frac{\eps^p}{p2^p}$. We will take $d$ to be $2$, $\eps <1$ (to simplify the computations) and $g$ to be the function constantly equal to an $L \in \N$ -- leading to what \cite{LeuPin21} calls a {\it rate of $L$-metastability}. Thus, we see that
$$\Theta(\eps)=\Psi\left(2,\frac{\eps^p}{p2^{p+1}},g\right) = \Phi\left(2,\frac{\eps^p}{p2^{p+2}},G\left(\frac{\eps^p}{p2^{p+2}},h_{\frac{\eps^p}{p2^{p+1}},g,2}\right)\right) = \left\lceil\frac{2^{p+4}p G\left(\frac{\eps^p}{p2^{p+2}},h_{\frac{\eps^p}{p2^{p+1}},g,2}\right)}{\eps^p}\right\rceil$$
and, for any $n \in \N$,
$$h_{\frac{\eps^p}{p2^{p+1}},g,2}(n) = \Phi\left(2,\frac{\eps^p}{p2^{p+2}},n\right) + L =  \left\lceil\frac{2^{p+4}np}{\eps^p}\right\rceil+L.$$
Denoting the function $f \circ h_{\frac{\eps^p}{p2^{p+1}},g,2}$ by $j$, we remark that it takes an $n \in \N$ into
$$2\left(\left\lceil\frac{2^{p+4}np}{\eps^p}\right\rceil+L\right)\left(\left\lceil\frac{2^{p+4}np}{\eps^p}\right\rceil+L+1\right)$$
and thus $j^M=j$. Thus, the term above involving $G$,
$$G\left(\frac{\eps^p}{p2^{p+2}},h_{\frac{\eps^p}{p2^{p+1}},g,2}\right),$$
becomes
$$j^{\left(\left\lceil\frac{p2^{p+2}}{\eps^p}\right\rceil\right)}(0),$$
so we may write the rate of $L$-metastability using only the auxiliary function $j$, as 
$$\Theta(\eps)=\left\lceil\frac{2^{p+4} \cdot p \cdot j^{\left(\left\lceil\frac{p2^{p+2}}{\eps^p}\right\rceil\right)}(0)}{\eps^p}\right\rceil.$$

\section{Acknowledgements}

This work has been supported by the German Science Foundation (DFG Project KO 1737/6-1) and by a grant of the Romanian Ministry of Research, Innovation and Digitization, CNCS/CCCDI -- UEFISCDI, project number PN-III-P1-1.1-PD-2019-0396, within PNCDI III.

\end{document}